\newtheorem{theorem}{Theorem}[section]
\newtheorem{proposition}[theorem]{Proposition}
\theoremstyle{definition}
\newtheorem{definition}[theorem]{Definition}
\theoremstyle{remark}
\newtheorem{remark}[theorem]{Remark}
\numberwithin{equation}{section}
\def\Hom{\mathop{\rm Hom}\nolimits}
\def\Tot{\mathop{\rm Tot}\nolimits}
\def\Cb{{\mathbb C}}
\def\a{\alpha}
\def\D{\Delta}
\def\om{\omega}
\def\s{\sigma}
\def\tPsi{\tilde\Psi}
\def\ot{\otimes}
\def\ra{\rightarrow}
\def\al{>\hspace{-4pt}\vartriangleleft}
\def\hd{\overset{\ra}{\partial}}
\def \vd{\uparrow\hspace{-4pt}\partial}
\def\hs{\overset{\ra}{\sigma}}
\def \vs{\uparrow\hspace{-4pt}\sigma}
\def\hta{\overset{\ra}{\tau}}
\def \vta{\uparrow\hspace{-4pt}\tau}
\def\hb{\overset{\ra}{b}}
\def \vb{\uparrow\hspace{-4pt}b}
\def\hB{\overset{\ra}{B}}
\def \vB{\uparrow\hspace{-4pt}B}
\def\p{\partial}
\def\0D{\Delta^{(0)}}
\def\1D{\Delta^{(1)}}
\def\Db{\blacktriangledown}
\def\td{\tilde}
\def\ni{\noindent}
\def\build#1_#2^#3{\mathrel{
\mathop{\kern 0pt#1}\limits_{#2}^{#3}}}
\newcommand{\ps}[1]{~\hspace{-4pt}_{^{(#1)}}}
\newcommand{\ns}[1]{~\hspace{-4pt}_{_{{<#1>}}}}
\newcommand{\sns}[1]{~\hspace{-4pt}_{_{{<\overline{#1}>}}}}
\def\odots{\ot\cdots\ot}
\numberwithin{equation}{section}
\def\a{\alpha}
\def\om{\omega}
\def\s{\sigma}
\def\ve{\varepsilon}
\def\vp{\varphi}
\def\D{\Delta}
\def\ot{\otimes}
\def\part{\partial}
\def\ra{\rightarrow}
\def\text{\hbox}
\def\ot{\otimes}
\def\ra{\rightarrow}
\def\Hom{\mathop{\rm Hom}\nolimits}
\def\Id{\mathop{\rm Id}\nolimits}
\def\build#1_#2^#3{\mathrel{
\mathop{\kern 0pt#1}\limits_{#2}^{#3}}}
\numberwithin{equation}{section}
\date{ \today}
\begin{document}
\title{Cup products in Hopf cyclic cohomology with coefficients in  contramodules}
\author{Bahram Rangipour}

\address{ Department of Mathematics and
Statistics,University of New Brunswick, Fredericton, NB,  CANADA, E3B 5A3}

\email{bahram@unb.ca}

\subjclass{1991 Mathematics Subject Classification. Primary 58B34; Secondary 16T05}

\dedicatory{Dedicated  to Henri Moscovici, with  highest admiration and appreciation .}
\keywords{Hopf cyclic cohomology, cup products, contramodules}
 \begin{abstract}
\ni We use stable anti Yetter-Drinfeld   contramodules to
 improve  the cup products  in Hopf cyclic cohomology.
The improvement  fixes the lack of functoriality  of the  cup products previously defined and show that the cup products are sensitive to the coefficients.
  \end{abstract}

\maketitle

\section{Introduction}
Hopf cyclic cohomology was invented by Alain Connes and Henri Moscovici as a computational tool for computing the index cocycle of the hypoelliptic operators on manifolds \cite{CM98}. One of the object of the theory was to study the cyclic cocycles generated by a  symmetric system, in the sense of  noncommutative geometry, which is usually given by an action or a coaction of a Hopf algebra on an algebra or a coalgebra.  The main tool for  transferring  such  cocycles  to the cyclic complex of algebras is  a  characteristic map  defined in \cite{CM98}. The characteristic map is based on an invariant trace on the algebra of functions on the manifold in question. However in many  situations  the invariant trace does not exist, for example see \cite{CM05}. For such cases the  invariant cyclic cocycles   play the role of  invariant trace and one defines a higher version of the characteristic map \cite{MC02,AG02}.  By the generalization of Hopf cyclic cohomology \cite{HKRS04-1,HKRS04-2} that allows one to take advantage of coefficients for Hopf cyclic cohomology,  the invariant cyclic cocycles are  understood as examples of Hopf cyclic  cocycles.  As a result,  one generalizes the characteristic map to a cup product \cite{KR05}. Similarly, the ordinary cup product in algebras  was also generalized to another type of cup product in Hopf cyclic cohomology by replacing cycles and their characters with twisted cycles and their twisted characters.
   In \cite{BR08,AK08}, by  a straight application of cyclic Eilenberg-Zilber theorem ( c.f.  \cite{KR04,GJ}),  the cup products was reconstructed and simplified. Finally, it is shown that all cup products defined in \cite{KR05,BR08,AK08,AG02} are the same in the level of cohomology \cite{AK10}.

   The suitable coefficients for Hopf cyclic cohomology  mentioned above is called stable anti Yetter-Drinfeld (SAYD) module \cite{HKRS04-2, JS}.
   It has both module and comodule structure, over the Hopf algebra in question, with two compatibilities made of composition of action and coaction.
    However it is proved that Hopf cyclic cohomology works with a generalization of  SAYD  modules called SAYD contramodules \cite{TB08}.
 Contramodules for a coalgebras was introduced in \cite{EM65}.  A right  contramodule of a coalgebra $C$ is a vector space $\mathcal{M}$ together with a $\Cb$-linear map $\alpha : \hom(C, \mathcal{M})\ra \mathcal{M}$ makes the   diagrams \eqref{diagram} commutative.

An SAYD contramodule  $\mathcal{M}$  is a module and contramodule together with  two compatibilities made of $\a$ and the action of $H$ on $\mathcal{M}$.  As an example if  $M$ is a SAYD module over $H$ then $\hom_k(M,\Cb)$ is an SAYD contramodule over $H$.

In this paper, building on the methods we developed in \cite{BR08}, we generalize the cup products defined in the same paper by using SAYD contramodules coefficients. By Theorem \ref{cup-general-1} and Theorem \ref{cup-theorem-2} we  show that the cup products is sensitive to  coefficients.  In Section 2 we recall Hopf cyclic cohomology with coefficients in SAYD modules and contramodules.    In Section 3 we  define the cup products for  compatible pair of SAYD modules and contramodules. Here a compatible pair reads a pair of SAYD module and contramodule  endowed with a pairing with values in the ground field and compatible  with respect to actions and coactions. Finally, in Section 4 we generalize the results of Section 3 for  arbitrary  coefficients without any compatibility between them. The range of new cup products are  ordinary cyclic cohomology of algebras with coefficients in  vector spaces.

In this note a Hopf algebra is denoted by a sextuple  $(H,\mu,\eta,\D,\ve, S)$, where $ \mu$, $\eta$, $\D,\ve$, and $S$ are  multiplication, unit, comultiplication, counit,  and antipode respectively. We use the Sweedler notation for comultiplications and coactions  i.e., for coalgebras  we use $\D(c)=c\ps{1}\ot c\ps{2}$, for comodules we use $\Db(a)=a\ns{0}\ot a\ns{1}$ and for coefficients we use  $\Db(m)=m\sns{-1}\ot m\sns{0}$. All algebras, coalgebras and Hopf algebras are over the field of complex numbers  $\Cb$. The unadorn tensor product $\ot$ reads  $\ot_\Cb$.

\medskip

We would like to thank  Tomasz Brzezi\'{n}ski for  Remark \ref{remark}. We are also grateful of the referee for his carefully reading the manuscript and his   valuable comments.

 \section{Hopf cyclic cohomology with coefficients}
\subsection{ Stable anti Yetter-Drinfeld-module}

For the  reader's convenience,  we  briefly   recall the definition of Hopf cyclic cohomology of coalgebras and algebras under the symmetry of
Hopf algebras  with coefficients in SAYD  modules \cite{HKRS04-1}, and with coefficients in SAYD contramodules \cite{TB08}.

Let us recall the definition of SAYD modules over a Hopf algebra from \cite{HKRS04-2}. Given a Hopf algebra  $H$, we say  that
 $M$ is a right-left SAYD module over $H$ if
$M$ is a right module and left module over $H$ with the following compatibilities.
\begin{align}\label{SAYD}
&\Db_M(m\cdot h)= S(h\ps{3})m\sns{-1}h\ps{1}\ot m\sns{0}\cdot h\ps{2}\\
&m\sns{0}\cdot m\sns{-1}=m.
\end{align}
The other three  flavors i.e,  left-left, left-right, and right-right are defined similarly \cite{HKRS04-2}.

Let $C$ be a $H$-module coalgebra, that is a coalgebra endowed with an action, say from left,
 of $H$ such that its comultiplication and counit are $H$-linear, i.e,
\begin{align}
\D(h\cdot c)=h\ps{1}\cdot c\ps{1}\ot h\ps{2}\cdot c\ps{2}, \quad \ve(h\cdot c)=\ve(h)\ve(c).
\end{align}

 Having the  datum $(H, C, M)$, where $C$ is an $H$-module coalgebra and $M$ an right-left SAYD over $H$,
 one defines in \cite{HKRS04-1} a  cocyclic module $\{C^n_H(C,M), \p_i,\s_j,\tau \}_{n\ge 0}$   as follows.

 \begin{equation}\label{cyclic-coalgebra-0}
  C^n_H(C,M):=M\ot_H C^{\ot n+1}, \quad n\ge 0,
 \end{equation}
 with the following cocyclic structure,
\begin{align}\label{cyclic-coalgebra-1}
&\p_i:C^n_H(C,M)\ra C^{n+1}_H(C,M), & 0\le i\le n+1\\\label{cyclic-coalgebra-2}
& \s_j: C^n_H(C,M)\ra C^{n-1}_H(C,M), & 0\le j\le n-1,\\\label{cyclic-coalgebra-3}
& \tau:C^n_H(C,M)\ra C^n_H(C,M),
\end{align}

defined explicitly as follows, where we abbreviate $\td c:=c^0\odots c^n$,
 \begin{align}\label{cyclic-coalgebra-4}
& \p_i(m\ot_H \td c )=m\ot_H c^0\odots \Delta(c^i)\odots c^n,
\\\label{cyclic-coalgebra-5}
 &\p_{n+1}(m\ot_H \td c)=m\sns{0}\ot_H c^0\ps{2}\ot c^1\odots c^n\ot
  m\sns{-1}\cdot c^0\ps{1},\\\label{cyclic-coalgebra-6}
 &\sigma_i(m\ot_H \td c)=m\ot_H c^0\odots \epsilon(c^{i+1})\odots c^n,
 \\\label{cyclic-coalgebra-7}
 &\tau(m\ot_H \td c)=m\sns{0}\ot_H c^1\odots c^n\ot m\sns{-1}\cdot c^0.
 \end{align}
It is checked in  \cite{HKRS04-1} that  the above graded module  defines a cocyclic module.

Similarly  an algebra  which is a $H$-module and its algebra structure is $H$-linear is called $H$-module algebra.  In other words,  for any $a,b\in A$ and any $h\in H$ we have
\begin{equation}
h\cdot(ab)=(h\ps{1}\cdot a)( h\ps{2}\cdot b), \quad h\cdot 1_A=\ve(h)1_A.
\end{equation}

Let $A$ be a  $H$-module algebra. One endows $M\ot A^{\ot n+1}$ with the diagonal action of $H$ and forms $C^n_H(A,M):=\Hom_H(M\ot A^{\ot n+1}, \Cb)$ as the space of
$H$-linear maps. It is checked in \cite{HKRS04-1} that the following defines a cocyclic module structure on $C^n(A,M)$.
\begin{align*}
&(\p_i \vp)(m\ot \td{a}) = \vp(m\ot a^0\ot \cdots
\ot a^i a^{i+1}\ot\cdots \ot a^{n+1}),\\
&  (\p_{n+1}\vp)(m\ot \td{a}) =\vp(m\sns{0}\ot
(S^{-1}(m\sns{-1})\cdot a^{n+1})a^0\ot a^1\ot\cdots \ot a^{n}),\\
&(\s_i\vp)(m\ot \td a) = \vp(m\ot a^0 \ot \cdots
\ot a^i\ot 1 \ot \cdots \ot a^{n-1}), \\
&(\tau\vp)(m\ot \td a) = \vp(m\sns{0}\ot S^{-1}(m\sns{-1})\cdot a^n\ot a^0\ot \cdots \ot a^{n-1}).
\end{align*}

The cyclic cohomology of this cocyclic module is denoted by $HC^\ast_H(A,M)$.

\medskip

An algebra is called a  $H$-comodule algebra if  it is a $H$ comodule and its algebra structure are $H$ colinear, which means that
\begin{equation}
(h\cdot a)\ns{0}\ot (h\cdot a)\ns{1}= h\ps{1}\cdot a\ns{0}\ot h\ps{2}\cdot a\ns{1}.
\end{equation}

Similar to the other case, one
defines  $^H C^n(A,M)$ to be the space of all colinear maps from $A^{\ot n+1}$ to $M$. One checks that the following defines a cocyclic module
structure on $^H C^n(A,M)$.

\begin{align}\label{cyclic-algebra-comodule-1}
&(\p_i\vp)(\td a)=\vp( a^0\ot \cdots
 \ot a^i a^{i+1}\ot\cdots \ot a^{n+1}),\\\label{cyclic-algebra-comodule-2}
 &(\p_{n+1}\vp)(\td a)= \vp(a^{n+1}\ns{0}a^0\ot a^1\cdots \ot
a^{n-1}\ot a^{n})\cdot a^{n+1}\ns{-1},\\\label{cyclic-algebra-comodule-3}
& (\s_i \vp)( \td a)= \vp(a^0 \ot \cdots \ot a^i\ot
 1 \ot \cdots \ot a^{n-1}),\\\label{cyclic-algebra-comodule-4}
&(\tau\vp )(a^0\ot\cdots\ot a^n)=\vp( a^n\ns{0}\ot
  a^0\ot\cdots \ot a^{n-1}\ot a^{n-1})\cdot a^{n}\ns{-1}.
\end{align}
The cyclic cohomology of this cocyclic module is denoted  by $^H HC^\ast(A,M)$.

\subsection{SAYD contramodule}
Let  us recall SAYD contramodules from \cite{TB08}.  A  right contramodule of a coalgebra   $H$ is a vector space
 $\mathcal{M}$ together with a $\Cb$-linear map $\alpha : \Hom(H, \mathcal{M})\to \mathcal{M} $ making the following diagrams commutative
\begin{equation}\label{diagram}
\xymatrix{
\Hom (H,  \Hom(  H, \mathcal{M}) \ar[rrrr]^-{\Hom(H,  {\a})}\ar[d]_\Theta &&&& \Hom (H,  \mathcal{M}) \ar[d]^{\a} \\
\Hom (H\ot H, \mathcal{M}) \ar[rr]^-{\Hom(\D,  \mathcal{M})} && \Hom(H,  \mathcal{M}) \ar[rr]^-{\a} && \mathcal{M} ,}
$$
$$
\xymatrix{\Hom(  \Cb,  \mathcal{M}) \ar[rr]^-{\Hom(  \ve,  \mathcal{M})} \ar[rd]_\simeq && \Hom( H,  \mathcal{M})\ar[dl]^{\a} \\
& \mathcal{M} , & }
\end{equation}

where $\Theta$ is the standard isomorphism given by $\Theta(f)(h\ot h') = f(h)(h')$.
\begin{definition}[\cite{TB08}]
 A  left-right anti-Yetter-Drinfeld (AYD) contramodule $\mathcal{M}$ is a left $H$-module (with the action denoted by a dot) and a right
  $H$-contramodule with the
structure map $\alpha$, such that, for all $h\in H$ and $f\in \Hom(H, \mathcal{M})$,
$$
h\!\cdot\! \alpha(f) = \alpha\left( h\ps{2}\!\cdot\! f\left( S(h\ps{ 3})(-) h\ps{1}\right)\right).
$$
$M$ is said to be {\em stable}, provided that, for all $m\in \mathcal{M}$, $\alpha(r_m) = m$,
 where $r_m: H\to \mathcal{M}$, $h\mapsto h\!\cdot\! m$.

\end{definition}

We refer the reader to \cite{TB08} for more details on SAYD contramodules.
If $M$ is an AYD module, then its dual $\mathcal{M}=M^*$ is an
 AYD contramodule (with the sides interchanged) and  SAYD
 modules correspond to SAYD contramodules. For example, let $M$ be  a right-left AYD
 module  \eqref{SAYD}, the  dual vector space $\mathcal{M}= M^*$ is a right  $H$-module by $ m \ot h \mapsto  m\cdot h$,
$$
(h\!\cdot\! f) (m) =f(m\!\cdot\! h),
$$
for all $h\in H$, $f\in \mathcal{M} =\Hom(M, \Cb)$ and $m\in M$, and a right
 $H$-contramodule with the structure map $\alpha(f)(m) = f(m\ns{-1})(m\ns{0})$, $f\in
\Hom(H, \mathcal{M})$, and $m\in M$ \cite{TB08}.

Let $A$ be a left $H$-module algebra and $\mathcal{M}$ be a left-right  SAYD contramodule over $H$.  We let $C^n_H(A,\mathcal{M})$ to be the space of left
$H$-linear maps

\begin{equation} \label{cycli-contra-0}
\Hom_H( {A^{\otimes n+1}},  \mathcal{M}),
\end{equation}
 and, for all $0\leq i,j\leq n$, define $ \p_i: C^{n-1}_H(A,M)\to C^{n}_H(A,\mathcal{M})$,
$\sigma_j: C^{n+1}_H(A,\mathcal{M})\to C^{n}_H(A,\mathcal{M})$, $\tau: C^n_H(A,\mathcal{M})\to C^n_H(A,\mathcal{M})$, by
\begin{align}\label{cycli-contra-1}
&\p_i(\vp)(a^0\odots a^{n}) = \vp(a^0 \odots  a^ia^{i+1}\odots  a^{n}), \qquad 0\leq i < n, \\ \label{cycli-contra-2}
&\p_{n}(\vp)(a^0\odots a^{n}) = \alpha\left(\vp\left(\left(S^{-1}(-)\!\cdot\! a^n\right) a^0\ot a^1 \odots  a^{n-1}\right)\right),\\\label{cycli-contra-3}
&\sigma_j (\vp) (a^0\odots a^{n}) = \vp(a^0\odots a^j\ot 1_A\ot a^{j+1}\odots a^{n}),\\\label{cycli-contra-4}
&\tau(\vp)(a^0\odots a^{n}) = \alpha\left(\vp\left(S^{-1}(-)\!\cdot\! a^n\ot a^0\odots  a^{n-1}\right)\right).
\end{align}
 It is shown in \cite{TB08} that the above operators define a  cocyclic module on $C^\ast_H( A,\mathcal{M})$.  We denote the cyclic cohomology of
 $C^\ast_H(A, \mathcal{M})$ by $HC^\ast_H(A,\mathcal{M})$. For  $\mathcal{M}=M^\ast$, where $M$ is a SAYD  module over $H$, it is easy to see that  $C^\ast_H(A,\mathcal{M})$$\simeq
 C^\ast_H(A,M)$.

 Indeed let $M$ be a right-left SAYD module and $\mathcal{M}:=\Hom(M, \Cb)$ be the corresponding right-left SAYD contramodule. We define  the following maps
 \begin{align*}
& \mathcal{I}: C^n_H(A, M)\ra C^n_H(A,\mathcal{M}), \quad \mathcal{J}: C^n_H(A, \mathcal{M})\ra C^n_H(A,M), \\
 &\mathcal{I}(\phi)(a^0\odots a^n)(m)=\phi(m\ot a^0\odots a^n), \\
 &\mathcal{J}(\phi)(m\ot a^0\odots a^n)=\phi(a^0\odots a^n)(m).
 \end{align*}
 \begin{proposition}
 The above map $\mathcal{I}$ is an isomorphism of cocyclic modules.
 \end{proposition}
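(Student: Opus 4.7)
The plan is to recognize $\mathcal{I}$ and $\mathcal{J}$ as the two directions of the classical tensor-hom adjunction
$\Hom(M \ot A^{\ot n+1}, \Cb) \simeq \Hom(A^{\ot n+1}, \Hom(M,\Cb))$, and to verify separately that this adjunction (i) restricts to a bijection at the level of $H$-linear maps, and (ii) intertwines the two prescribed cocyclic structures.

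First I would check that $\mathcal{I}$ and $\mathcal{J}$ are mutually inverse $\Cb$-linear bijections at the level of ordinary $\Hom$; this is a one-line verification straight from the definitions. Next I would check $H$-linearity on both sides. Using the $H$-action $(h\!\cdot\! f)(m) = f(m\!\cdot\! h)$ on $\mathcal{M}$, together with the diagonal $H$-module structure on $M\ot A^{\ot n+1}$, the adjunction sends $H$-linear maps to $H$-linear maps (and vice versa) by a direct substitution.

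The compatibility with the cocyclic structure then splits in two. All faces except the last, and all degeneracies, act only on the algebra entries and leave the coefficient slot untouched on both sides, so $\mathcal{I}\circ \p_i = \p_i \circ \mathcal{I}$ for $0 \le i \le n$ and $\mathcal{I}\circ \s_j = \s_j \circ \mathcal{I}$ are immediate. The non-trivial compatibilities are those with the last face $\p_{n+1}$ and the cyclic operator $\tau$, where the SAYD coaction appears on the module side while the contramodule structure map $\alpha$ appears on the contramodule side.

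The bridge between the two formulations is the explicit formula $\alpha(f)(m) = f(m\sns{-1})(m\sns{0})$. Applying it to $\tau$, I would compute
\begin{align*}
\tau(\mathcal{I}(\phi))(a^0 \odots a^n)(m)
&= \alpha\!\left(\mathcal{I}(\phi)\!\left(S^{-1}(-)\!\cdot\! a^n \ot a^0 \odots a^{n-1}\right)\right)\!(m) \\
&= \mathcal{I}(\phi)\!\left(S^{-1}(m\sns{-1})\!\cdot\! a^n \ot a^0 \odots a^{n-1}\right)\!(m\sns{0}) \\
&= \phi\!\left(m\sns{0} \ot S^{-1}(m\sns{-1})\!\cdot\! a^n \ot a^0 \odots a^{n-1}\right),
\end{align*}
which is exactly $(\tau\phi)(m \ot \td a) = \mathcal{I}(\tau\phi)(a^0\odots a^n)(m)$. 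The same substitution works verbatim for $\p_{n+1}$, matching the module-side formula with its $\alpha$-weighted contramodule counterpart. The only potential obstacle is keeping track of the antipode and side conventions for the coaction; once the explicit form of $\alpha$ is inserted, the two cocyclic structures line up identity by identity, and the stability/SAYD axioms are never invoked, since they only enter when verifying that each side genuinely gives a cocyclic module, which is already recorded in \cite{HKRS04-1} and \cite{TB08}.
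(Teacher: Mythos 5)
Your proof is correct and follows essentially the same route as the paper's: after the easy checks (mutual inverses, $H$-linearity, the first $n$ faces and the degeneracies), the whole content is the single computation unwinding $\tau\circ\mathcal{I}$ via the explicit formula $\alpha(f)(m)=f(m\sns{-1})(m\sns{0})$, which is exactly the chain of equalities displayed in the paper. Your additional remarks --- that $\p_{n+1}$ is handled by the same substitution and that the SAYD axioms are never invoked --- are accurate and only make explicit what the paper leaves to the reader.
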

 \begin{proof} It is obvious that $\mathcal{I}$ and $\mathcal{J}$ are inverse to one another. We shall check that $\mathcal{I}$  commutes with cyclic structures. It is easy to see that faces, except possibly the very last one,  and degeneracies commutes with $\mathcal{I}$. So it is suffices to check that  $\mathcal{I}$ commutes with the cyclic operators. Indeed,
 \begin{align*}
 &\mathcal{I}\circ\tau(\phi)(a^0\odots a^n)(m)=\tau(\phi)(m\ot a^0\odots a^n)\\
 &=\phi(m\sns{0}\ot S^{-1}(m\sns{-1})\cdot a^n\ot a^0\odots a^{n-1})\\
 &=\mathcal{I}(\phi)(S^{-1}(m\sns{-1})\cdot a^n\ot a^0\odots a^{n-1})(m\sns{0})\\
 &=\tau\circ\mathcal{I}(\phi)(a^0\odots a^n)(m).
 \end{align*}
 \end{proof}
\section{Cup products in Hopf cyclic cohomology}
In this section we use the same strategy as in \cite{BR08,AK08}
to generalize the cup products constructed in the same references.
Via these new cup products one has the luxury to construct cyclic cocycles by using a
compatible pair of  SAYD modules and contramodules rather than only a SAYD module.
\subsection{Module algebras paired with   module coalgebras}
Let $A$ be an $H$ module algebra and  $C$ be a $H$ module coalgebra acting on
$A$ in the sense that there is a map
\begin{equation}\label{0}
C\ot A\ra A,
\end{equation}
such that for any $h\in H$, any $c\in C$ and any $a,b\in A$ one has
\begin{align}\label{1}
 &(h\cdot c)\cdot a= h\cdot (c\cdot a)\\\label{2}
 &c\cdot (ab)=(c\ps{1}\cdot a)(c\ps{2}\cdot b)\\\label{3}
&c(1)=\epsilon(c)1
\end{align}
 One constructs a convolution algebra $B=\Hom_H(C,A)$,
which is the algebra of all $H$-linear  maps from $A$ to $C$.
 The unit of this algebra is given by $\eta\circ\epsilon$,
  where $\eta:\Cb\ra A$ is the unit of $A$.
  The multiplication of $f,g\in B$ is given by
\begin{equation}\label{4}
(f\ast g)(c)=f(c\ps{1})g(c\ps{2})
\end{equation}
\begin{definition}
Let $(\mathcal{M}, \a)$ be a left-right SAYD contramodule and $N$ be a
right-left SAYD module over $H$. We call $(N, \mathcal{M})$ compatible if there is a pairing
between $\mathcal{M}$ and $N$ such that
\begin{align}\label{compatibility1}
&  < n\cdot h\;\mid\; m>= < n\;\mid\; h\cdot m >,\\\label{compatibility2}
&  <  n \;\mid\;\a(f)>=<n\sns{0} \;\mid\; f(n\sns{-1})>,
 \end{align}
 for all $m\in \mathcal{M}$, $n\in N$, $f\in \Hom(H,\mathcal{M})$, and $h\in H$.
\end{definition}

 Let $(N,\mathcal{M})$ be compatible as above. We have the following cocyclic modules defined in
\eqref{cycli-contra-0} \dots \eqref{cycli-contra-4}, and \eqref{cyclic-coalgebra-0}
 \dots \eqref{cyclic-coalgebra-4} respectively.

\begin{equation}(C^\ast_H(A,\mathcal{M}), \p_i,\s_j,\tau), \quad \text{and}\quad (C^\ast_H(C,N),
\p_i,\s_j,\tau).\end{equation}
We define a new bicocyclic module by tensoring these cocycle module over $\Cb$.
The new bigraded  module has in its bidegree  $(p,q)$
\begin{equation}\label{c**}C_{\rm a-c}^{p,q}:=\Hom_H(A^{\ot p+1},\mathcal{M})\ot
(N\ot_H C^{\ot q+1}),\end{equation} with
horizontal structure $\hd_i=\Id\ot \p_i$, $\hs_j=\Id\ot \s_j$,
 and $\hta=\Id\ot \tau$ and vertical structure $\vd_i=\p_i\ot\Id$,
  $\vs_j=\s_j\ot\Id$, and
$\vta=\tau\ot \Id$. Obviously $(C^{p,q}_{\rm a-c},\hd,\hs,\hta,\vd,\vs,\vta)$
  defines a bicocyclic module.

Now let us define  the  map
\begin{align}\label{acpsi}
&\Psi:D^q(C_{\rm a-c}^{\ast,\ast})\ra \Hom(B^{\ot q+1},\Cb),\\\notag &
\Psi(\phi\ot (n\ot c^0\odots c^q))(f^0\odots f^q)=
< n \;\mid\;  \phi(f^0(c^0)\odots f^q(c^q)) >.
\end{align}
Here $D(C^{\ast,\ast}_{\rm a-c})$ denotes the diagonal of the bicocyclic module  $C^{\ast,\ast}_{\rm a-c}$. It is a cocyclic module whose $q$th component is $C^{q,q}$ and its cocyclic structure morphisms are  $\p_i:=\hd_i\circ\vd_i$, $\s_j:=\hs_j\circ\vs_j$, and $\tau:=\hta\circ\vta$.
\begin{proposition} \label{cyclic-map}
The map $\Psi$ is  a well-defined map of cyclic modules.
\end{proposition}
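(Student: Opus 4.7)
The target $\Hom(B^{\ot q+1},\Cb)$ carries the standard cocyclic structure of Connes' cyclic cohomology of the convolution algebra $B=\Hom_H(C,A)$. Two items must be checked: (a) that $\Psi$ descends to the relative tensor product $N\ot_H C^{\ot q+1}$ on the coalgebra factor, and (b) that $\Psi$ intertwines each cocyclic operator on $D^q(C^{*,*}_{\rm a-c})$ with the corresponding one on the target. For well-definedness, the plan is to rewrite $\Psi(\phi\ot(n\!\cdot\! h\ot\td c))$ by first using the pairing compatibility \eqref{compatibility1} to move $h$ across $\langle-\mid-\rangle$, then the $H$-linearity of $\phi$ to convert this into the diagonal action of $\D^{(q)}(h)$ on the arguments of $\phi$, and finally the $H$-linearity of each $f^i$ to transfer the factors of $\D^{(q)}(h)$ onto the $c^i$; this reproduces $\Psi(\phi\ot(n\ot h\!\cdot\!\td c))$.

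For the inner cofaces $\p_0,\ldots,\p_{q-1}$ and all degeneracies $\s_0,\ldots,\s_{q-1}$ the verification is direct. An inner coface splits $c^i$ via $\D$ on the coalgebra side and multiplies the two adjacent slots on the algebra side, matching the convolution formula $(f^i\ast f^{i+1})(c^i) = f^i(c^i\ps{1})f^{i+1}(c^i\ps{2})$ in $B$. For a degeneracy, the identity $1_B = \eta\circ\epsilon$ ensures that evaluating the inserted $1_B$ at $c^{j+1}$ yields $\epsilon(c^{j+1})1_A$, reconciling the scalar $\epsilon(c^{j+1})$ produced by $\s_j$ on the coalgebra side with the insertion of $1_A$ from $\s_j$ on the algebra side.

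The substantive step is the commutation with the extreme coface $\p_q$ and the cyclic operator $\tau$. Unfolding $\tau\chi$ via \eqref{cyclic-coalgebra-7} and \eqref{cycli-contra-4} (the case of $\p_q$ via \eqref{cyclic-coalgebra-5} and \eqref{cycli-contra-2} is analogous) produces an expression of the shape $\langle n\sns{0}\mid\a(F)\rangle$, where the contramodule structure map $\a$ acts on a function involving the twist $S^{-1}(-)$. The compatibility \eqref{compatibility2} rewrites this as $\langle(n\sns{0})\sns{0}\mid F((n\sns{0})\sns{-1})\rangle$, after which coassociativity of the coaction on $N$ yields
\[
n\sns{-1}\ot (n\sns{0})\sns{-1}\ot (n\sns{0})\sns{0} \;=\; (n\sns{-1})\ps{1}\ot (n\sns{-1})\ps{2}\ot n\sns{0}.
\]
The $H$-linearity of $f^q$ then concentrates two $H$-actions on a single slot of $\phi$, where they combine via the antipode identity $S^{-1}(h\ps{2})h\ps{1}=\epsilon(h)$ to yield the scalar $\epsilon(n\sns{-1})$; counitality of the coaction absorbs this into $n\sns{0}$ to recover $n$, leaving exactly the formula for $\tau\Psi(\chi)$.

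The main obstacle is disciplined bookkeeping of Sweedler indices across three operations that each involve $N$: the pairing compatibility \eqref{compatibility2}, the iterated coaction, and the antipode identity. Notably, neither the stability of $N$ and $\mathcal{M}$ nor the full AYD compatibility is invoked in any individual commutation; these hypotheses serve only to guarantee that source and target are honest cocyclic (not merely paracyclic) modules. Once the five commutations above are established, $\Psi$ is automatically a morphism of cyclic modules.
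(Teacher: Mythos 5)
Your proposal is correct and follows essentially the same route as the paper: well-definedness over $\ot_H$ via \eqref{compatibility1} together with the $H$-linearity of $\phi$ and the $f^i$, and the key commutation with $\tau$ (and $\p_q$) via \eqref{compatibility2}, coassociativity of the coaction on $N$, and the identity $S^{-1}(h\ps{2})h\ps{1}=\ve(h)\,1$. The only differences are cosmetic --- you run the well-definedness chain in the opposite direction and spell out the inner cofaces and degeneracies that the paper leaves to the reader --- and your closing observation that stability and the AYD conditions enter only to make source and target honest cocyclic modules is consistent with what the paper actually uses.
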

\begin{proof}
First let us show that $\Psi$ is well-defined. Indeed,
by using the facts that $\mathcal{M}$ and $ N$ are compatible,
 $f^i$ are  $H$-linear, $\phi$ is  equivariant  and
\eqref{1} holds, we see that,
\begin{align*}
&\Psi(\phi\ot (n\ot h\ps{1}c^0\odots h\ps{n+1}c^n))(f^0\odots f^n)\\
&=< n \;\mid\;  \phi(f^0(h\ps{1}\cdot c^0)\odots f^n(h\ps{n+1}
\cdot c^n))\\
&=<n \;\mid\; h\cdot\phi(f^0(c^0)\odots f^n(c^n))>\\
&=<nh \;\mid\; \phi(f^0(c^0)\odots f^n(c^n))>\\
&=\Psi(\phi\ot (n\cdot h\ot c^0\odots c^n))(f^0\odots f^n).
\end{align*}
 Next, we show that $\Psi$  commutes with cocyclic structure morphisms.
  To this end,  we need only to show the commutativity of $\Psi$ with zeroth cofaces, the last
  codegeneracies and the cyclic operators because these  operators generate all cocyclic structure morphisms.
We check it only  for the cyclic operators and leave the rest to the reader.
 Let $\tau_B$ denote the cyclic operator of the ordinary cocyclic module of the algebra $B$.
\begin{align*}
&\Psi(\tau(\vp)\ot \tau(n\ot c^0\odots c^q))(f^0\odots f^q)\\
&=\Psi(\tau\vp\ot(n\sns{0}\ot c^1\odots c^q\ot n\sns{-1}c^0))(f^0\odots f^q)\\
&=< n\sns{0}  \;\mid\; \tau(\vp)(f^0(c^1)\odots f^{q-1}(c^n)\ot f^q(n\sns{-1}c^0))>\\
&=< n\sns{0}  \;\mid\; \alpha\left(\vp\left(S^{-1}(-)\!\cdot\! f^q(n\sns{-1} c^0)\ot f^0(c^1)
\odots f^{q-1}(c^n) \right)\right)> \\
&=<  n\sns{0} \;\mid\;\vp\left(S^{-1}(n\sns{-1})\!\cdot\! f^q(n\sns{-2}c^0)\ot
f^0(c^1)\odots f^{q-1}(c^n) \right)>\\
&=< n \;\mid\; \vp\left( f^q(c^0)\ot f^0(c^1)\odots f^{q-1}(c^q) \right)>\\
&=\tau_B\Psi(\vp\ot (n\ot c^0\odots c^q))(f^0\odots f^q).
\end{align*}
Here in the passage from fourth line to the fifth one  we use \eqref{compatibility2}.
\end{proof}

Let  $C:=\bigoplus_{p,q\ge 0} C^{p,q}$  be a bicocyclic module.  With $Tot(C)$ designating the total mixed complex
$\, Tot(C)^n=\bigoplus_{p+q=n} C^{p,q}$, we
denote by $\Tot(C)$ the associated \textit{normalized} subcomplex,
obtained by retaining only the elements annihilated by all degeneracy
operators. Its total boundary is $b_T+B_T$, with $b_T$ and $B_T$
defined as follows:
\begin{align} \notag
&\hb_p=\sum_{i=0}^{p+1} (-1)^{i}\hd_i,
&&\vb_q=\sum_{i=0}^{q+1} (-1)^{i}\vd_i, \\ \label{bT}
&b_T=\sum_{p+q=n}\hb_p+\vb_q,\\  \notag
&\hB_p=(\sum_{i=0}^{p-1}(-1)^{(p-1)i}\hta^i)\hs_{p-1}\hta,  &&\vB_q=
(\sum_{i=0}^{q-1}(-1)^{(q-1)i}\vta^i)\hs_{q-1}\vta,\\ \label{BT}
&B_T=\sum_{p+q=n}\hB_p+\vB_q.\
\end{align}

The total complex of a bicocyclic module $C$ is a mixed complex, i.e, $b_T^2=B_T^2=b_TB_T+B_Tb_T=0$. As a result
 its cyclic cohomology is well-defined. By means of the analogue of
 the Eilenberg-Zilber theorem for bi-paracyclic
modules~\cite{GJ,KR04}, the diagonal mixed complex
$(D(C), b_D, B_D)$ and the total mixed complex $(\Tot C,b_T, B_T)$ can be seen to
be quasi-isomorphic in both Hochschild and cyclic cohomology. Here
 $D(C):=\bigoplus_{q\ge 0} C^{q,q}$ is a cocyclic module and therefore
 a mixed complex with (co)boundaries,
\begin{align}
\begin{split}
 &b_D:= \sum_{i=0}^{q+1}(-1)^q\vd_i\circ\hd_i,\\
 &B_D:= \left(\sum_{i=0}^{q-1}(-1)^{(q-1)i}\hta^i\vta^i\right)\hs_{q-1}\vs_{q-1}\hta\vta.
 \end{split}
 \end{align}
At the level of Hochschild cohomology the quasi-isomorphism is
implemented by the Alexander-Whitney map
$\, AW:= \bigoplus_{p+q=n} AW_{p,q}: \Tot(C)^n\ra D(C)^n $,
\begin{align}\label{AW}
\begin{split}
&AW_{p,q}: C^{p,q}\longrightarrow C^{p+q,p+q} \\
&AW_{p,q}=(-1)^{p+q}\underset{q\,\text{times}}{\underbrace{\vd_0\vd_0\dots
\vd_0}}\hd_n\hd_{n-1}\dots \hd_{q+1} \, .
\end{split}
\end{align}
Using a standard homotopy operator $H$, this can be supplemented by
a cyclic Alexander-Whitney map
$AW':= AW\circ B\circ H:  D^n\ra \Tot(C)^{n+2}$, and thus
upgraded to an $S$-map $\overline{AW} = (AW, AW')$,
 of mixed complexes. The inverse quasi-isomorphims are
provided by the shuffle maps $\, Sh:= D(C)^n\ra \Tot(C)^n$, resp.
$\overline{Sh} = (Sh, Sh')$, which are discussed in detail in~\cite{GJ,KR04}.

 Let  $c$ be $(b,B)$ cocycle  in $\Tot(C)^n$, $n=p+q$.
 Hence the class of  $\overline{AW}(c)$ in $HC^n(D(C))$ is well defined.

Now we consider the inclusion  $\iota:A\ra B=\Hom_H(C,A)$,
 defined by $\iota(a)(c)=c\cdot a$.
We see that
 $\iota(ab)(c)= c\cdot(ab)=(c\ps{1}\cdot a)(c\ps{2}\cdot b)=(\iota(a)\ast\iota(b))(c),$
 and  $\iota(1_A)(c)=c\cdot 1_A =\ve(c)1_A=1_B(c).$
 Hence $\iota$ is an algebra map and in turn
 induces a map in the level
 of cyclic cohomology groups: $$\iota: HC^{\ast}(B)\ra HC^{\ast}(A). $$

\begin{theorem} \label{cup-theorem-1}Let $H$ be a Hopf algebra,  $A$ be an $H$-module algebra,  $C$ be an
 $H$-module coalgebra acting on $A$,  and   $(N, \mathcal{M})$ be a compatible pair of SAYD
  module and contramodule over $H$. Then $\hat\Psi:= \iota\circ\Psi \circ\overline{AW}$
 defines a cup product in the level of cyclic cohomology groups:
\begin{equation}\label{cup-cont-coalgebra}
\hat\Psi:= \iota\circ\Psi \circ\overline{AW}: HC^p_H(A,\mathcal{M})\ot HC^q_H(C,N)\ra HC^{p+q}(A).
\end{equation}
\end{theorem}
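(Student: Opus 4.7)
The plan is to check that each of the three constituent maps in the composition $\hat\Psi = \iota \circ \Psi \circ \overline{AW}$ descends to a well-defined operation at the level of cyclic cohomology, so that the cup product itself is well-defined on cohomology classes.

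First, starting from representatives of $[\phi]\in HC^p_H(A,\mathcal{M})$ and $[\psi]\in HC^q_H(C,N)$, which we may assume to be normalized cocycles for the mixed complexes $(C^{\ast}_H(A,\mathcal{M}),b,B)$ and $(C^{\ast}_H(C,N),b,B)$ respectively, I would form the external tensor product $\phi\ot\psi \in C^{p,q}_{\rm a-c}$. Using the explicit formulas \eqref{bT} and \eqref{BT} for $b_T$ and $B_T$ on the bicocyclic module $C^{\ast,\ast}_{\rm a-c}$, a direct sign check shows that $\phi\ot\psi$ is a $(b_T+B_T)$-cocycle in $\Tot(C_{\rm a-c})^{p+q}$. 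The standard argument—identical to the classical proof that the tensor product of two mixed complexes is a mixed complex whose cyclic cohomology admits an external product—then shows that the class of $\phi\ot\psi$ in $HC^{p+q}(\Tot(C_{\rm a-c}))$ depends only on $[\phi]$ and $[\psi]$.

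Next, I would apply the cyclic Alexander--Whitney S-map $\overline{AW}=(AW,AW')$. By the cyclic Eilenberg--Zilber theorem quoted in the paper (see \cite{GJ,KR04}), $\overline{AW}$ is a quasi-isomorphism of mixed complexes $\Tot(C_{\rm a-c})\to D(C_{\rm a-c})$, and in particular induces a map (actually an isomorphism) on cyclic cohomology $HC^{p+q}(\Tot(C_{\rm a-c})) \to HC^{p+q}(D(C_{\rm a-c}))$. Then by Proposition \ref{cyclic-map} the map $\Psi$ is a morphism of cocyclic modules from $D(C_{\rm a-c})$ into the ordinary cocyclic module of the algebra $B=\Hom_H(C,A)$, so it induces $\Psi_\ast: HC^{p+q}(D(C_{\rm a-c}))\to HC^{p+q}(B)$. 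Finally, since $\iota:A\to B$ has been verified to be a unital algebra homomorphism, pullback by $\iota$ yields $\iota^\ast: HC^{p+q}(B)\to HC^{p+q}(A)$.

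Composing these four operations gives a bilinear map $\hat\Psi: HC^p_H(A,\mathcal{M})\ot HC^q_H(C,N) \to HC^{p+q}(A)$, as desired. The main obstacle is the first step: verifying carefully that the external product at the level of mixed complexes is well-defined, i.e.\ that coboundaries go to coboundaries. This amounts to a sign bookkeeping using the bigraded $(b_T,B_T)$ structure on $\Tot(C_{\rm a-c})$, together with the fact that we may work with normalized representatives so that $\phi\ot\psi$ is annihilated by all degeneracies. The remaining steps—$\overline{AW}$ respecting the $S$-operator, $\Psi$ commuting with cocyclic structure (already established in Proposition \ref{cyclic-map}), and $\iota^\ast$ being induced by an algebra map—are formal.
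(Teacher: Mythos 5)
Your proposal is correct and follows essentially the same route as the paper's own proof: choose (normalized) cocycle representatives, observe that their external tensor product is a $(b_T,B_T)$-cocycle in $\Tot(C_{\rm a-c}^{\ast,\ast})$, push it to the diagonal via the cyclic Eilenberg--Zilber map $\overline{AW}$, and then transfer by the cyclic map $\Psi$ of Proposition \ref{cyclic-map} and the pullback along the algebra map $\iota$. The only cosmetic difference is that the paper takes cyclic-cocycle representatives (so the cocycle condition for the tensor product is immediate), while you work with general normalized $(b,B)$-cocycles and make the well-definedness bookkeeping explicit.
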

\begin{proof}
Let $[\phi]\in HC^p_H(A,\mathcal{M})$ and $[\om]\in HC^q_H(C,N)$. Without loss of
 generality one assumes that $\phi$ and $\om$ are both cyclic cocycles, i.e,
  $$\hb(\phi)=\vb(\om)=0, \quad  \hta(\phi)=(-1)^p\phi, \quad   \vta(\om)=(-1)^q\om.$$  This implies
 that $\phi\ot \om$ is a $(b,B)$ cocycle in $\Tot(C_{\rm a-c}^{\ast,\ast})^{p+q}$.
 Hence $\overline{AW}(\phi\ot \om)$ defines a class in
 $HC^{p+q}(D(C_{\rm a-c}^{\ast,\ast}))$.
 Finally,  since $\iota$ and $\Psi$ both are cyclic map, the transferred
  cochain   $\iota\circ\Psi(\overline{AW}(\phi\ot \om))$ defines a class in $HC^{p+q}(A)$.
\end{proof}

\subsection{ Module algebras paired with comodule algebras}
Let $H $ be a Hopf algebra, $A$ a   left $H $-module algebra,
 $B$ a left $H $-comodule algebra,  and $(N,\mathcal{M})$ be a compatible
  pair  of SAYD module and contramodule over $H$.
One  constructs a crossed product algebra whose underlying vector
 space is $A\ot B$ with the $1\al 1$ as its unit and the following
 multiplication:
\begin{equation}\label{crossed-algebra}
(a\al b)(a'\al b')= a\,(b\ns{-1}\cdot a')\al b\ns{0}b'
\end{equation}
Now consider  the two cocyclic modules
\begin{equation}(C^\ast_H (A,\mathcal{M}), \p_i,\s_j,\tau), \quad \text{and}\quad
 (^H  C^\ast(B,N), \p_i,\s_j,\tau)
 \end{equation}
 introduced in \cite{TB08} and \cite{HKRS04-1} respectively and are recalled
  in \eqref{cyclic-algebra-comodule-1}  \dots \eqref{cyclic-algebra-comodule-4}
   and \eqref{cycli-contra-0}\dots
\eqref{cycli-contra-4}. We  define a bicocyclic module
by tensoring these cocyclic modules over $\Cb$.
The  $(p, q)$-bidegree component  $C^{p,q}_{\rm a-a}$
of this new bicocyclic module is given by
 \begin{equation}\label{bicyclic-algebra}
  ^H  \Hom(B^{\ot q+1}, N) \ot \Hom_H ( A^{\ot p+1},\mathcal{M}),
 \end{equation}

  with horizontal structure morphisms $\hd_i=\Id\ot \p_i$,
$\hs_j=\Id\ot \s_j$, and $\hta=\Id\ot \tau$ and vertical structure morphisms
$\vd_i=\p_i\ot \Id$, $\vs_j=\s_j\ot \Id$, and $\vta=\tau\ot \Id$.
 Now we define  a new morphism
 \begin{equation}\Phi:D(C^{\ast,\ast}_{\rm a-a})^n\ra C^n( A\al B),\end{equation}
  define by
\begin{align}\label{abpsi}
& \Phi(\psi\ot\phi)(a^0\al b^0\odots a^n\al b^n)=\\\notag
&=<\psi(b^0\ns{0}\odots b^n\ns{0})\;\mid\; \phi(  S^{-1}(b^0\ns{-1}\cdots
 b^n\ns{-1})\cdot a^0\ot\cdots\\&~~~~~~~~~~~~~~~~~~~~~~~~~~~~~~~~~~~
 ~~~~~~~~~~~\dots\ot S^{-1}(b^n\ns{-n-1})\cdot a^n)>.
\end{align}
\begin{proposition}\label{cyclic}
The map $\Phi$ defines a  cyclic map between  the diagonal of
$C_{\rm a-a}^{\ast,\ast}$ and the cocyclic module $C^\ast(A\al B)$.
\end{proposition}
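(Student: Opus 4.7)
The plan is to follow the strategy of Proposition \ref{cyclic-map}: first verify that $\Phi$ is well-defined on the stated tensor product of Hom-spaces, and then check that it commutes with the cocyclic structure on the diagonal $D(C^{\ast,\ast}_{\rm a-a})$. Well-definedness requires that the scalar $\langle \psi(\ldots) \mid \phi(\ldots) \rangle$ be insensitive to the ambiguities in representing $\psi \in {}^H\Hom(B^{\otimes q+1}, N)$ (colinearity) and $\phi \in \Hom_H(A^{\otimes p+1}, \mathcal{M})$ ($H$-linearity). The compatibility \eqref{compatibility1} allows shuttling $H$-actions from $N$ to $\mathcal{M}$ across the pairing; combined with the $H$-colinearity of $\psi$, the $H$-linearity of $\phi$, and the antipode identity $S^{-1}(h\ps{1})h\ps{2} = \varepsilon(h)$, this handles well-definedness.

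Next I would use the general fact that the cocyclic structure of $D(C^{\ast,\ast}_{\rm a-a})$ is generated by $\partial_0 = \hd_0\vd_0$, the last codegeneracy $\sigma_{n-1} = \hs_{n-1}\vs_{n-1}$, and the cyclic operator $\tau = \hta\vta$, so only these three operators need to be matched with their counterparts on $C^\ast(A \al B)$. The checks for $\partial_0$ and $\sigma_{n-1}$ are straightforward bookkeeping; the decisive fact they rely on is that the crossed product multiplication \eqref{crossed-algebra} twists $a^{i+1}$ by $b^i\ns{-1}$, which is exactly how the tensor product of module-algebra faces with comodule-algebra faces interacts once the cumulative coaction $S^{-1}(b^0\ns{-1} \cdots b^n\ns{-n-1})$ in \eqref{abpsi} has been distributed through $\phi$.

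The main obstacle is commutation with the cyclic operator. On the diagonal, $\tau(\psi \otimes \phi) = \tau(\psi) \otimes \tau(\phi)$, where the first $\tau$ is the ${}^H$-cocyclic operator \eqref{cyclic-algebra-comodule-4}, cycling $b^n$ to the front with a decoration by $b^n\ns{-1}$, and the second $\tau$ is the contramodule cocyclic operator \eqref{cycli-contra-4}, cycling $a^n$ to the front through the $S^{-1}$-twisted action and an application of $\alpha$. When $\Phi$ is evaluated at this element, the outer $\alpha$ is converted via compatibility \eqref{compatibility2} to $\langle \psi(\ldots)\sns{0} \mid \phi(S^{-1}(\psi(\ldots)\sns{-1}) \cdot a^n \otimes \cdots) \rangle$, reintroducing a copy of the coaction on $N$. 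A careful tracking of the resulting $S^{-1}$-twists against the twists produced by the crossed product multiplication on the target side, combined with \eqref{compatibility1} to redistribute $H$-actions across the pairing, the colinearity of $\psi$ and $H$-linearity of $\phi$, and standard antipode identities, identifies the result with $\tau_{\rm alg}\Phi(\psi\otimes\phi)$, the ordinary cyclic operator of the algebra $A \al B$ applied to $\Phi(\psi\otimes\phi)$. The bookkeeping of the nested $S^{-1}$-twists and the powers of $b^i\ns{-i-1}$ is the delicate point, and is precisely where both compatibility conditions on $(N,\mathcal{M})$ become essential.
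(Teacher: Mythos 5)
Your proposal follows essentially the same route as the paper's proof: reduce to the generating operators (zeroth coface, last codegeneracy, cyclic operator), verify the coface case via the crossed-product multiplication \eqref{crossed-algebra}, and handle the cyclic operator by converting the outer $\alpha$ through \eqref{compatibility2}, invoking the AYD property of $N$, and then using \eqref{compatibility1} together with the $H$-linearity of $\phi$ and the AYD contramodule property of $\mathcal{M}$ — exactly the sequence of steps in the paper's computation. The only difference is that you also flag well-definedness explicitly, which the paper checks only for the analogous map $\Psi$ in Proposition \ref{cyclic-map}; this is a harmless (indeed slightly more careful) addition.
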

\begin{proof}
We  show that $\Phi$  commutes with the cyclic structure morphisms.
 We shall  check it for the first  face operator and the   cyclic
operator and leave the rest  to the reader. Let us denote the cyclic structure
 morphisms of the algebra $A\al B$ by
$\p_i^{A\al B}, \s_j^{A\al B}$ and $\tau^{A\al B}$.
First we show that $\Phi$ commutes with the zeroth cofaces.
\begin{align*}
&\Phi(\hd_0\vd_0 (\psi\ot\phi))(a^0\al b^0\odots a^{n+1}\al b^{n+1})\\
&=\Phi (\p_0\phi\ot\p_0\psi))(a^0\al b^0\odots a^{n+1}\al b^{n+1})\\
& =<\p_0\psi(b^0\ns{0}\odots b^{n+1}\ns{0})\;\mid\;   \p_0\phi(  S^{-1}
(b^0\ns{-1}\cdots b^{n+1}\ns{-1})\cdot a^0\ot\cdots\\
&\hspace{7.3cm}\cdots\ot  S^{-1}(b^{n+1}\ns{-n-2})\cdot a^{n+1}  )>\\
&=<\psi(b^0\ns{0}b^1\ns{0}\odots b^{n+1}\ns{0})     \\
&\hspace{2cm}\;\mid\; \phi( S^{-1}(b^0\ns{-1}\cdots b^{n+1}\ns{-1})\cdot a^0S^{-1}
(b^1\ns{-2}\cdots b^{n+1}\ns{-2})\cdot a^1\ot\cdots\\
&\hspace{7.3cm}\cdots\ot  S^{-1}(b^{n+1}\ns{-n-2})\cdot a^{n+1}  )>\\
&=<\psi(b^0\ns{0}b^1\ns{0}\odots b^{n+1}\ns{0}) \\
&  \hspace{2cm} \;\mid\; \phi( S^{-1}(b^0\ns{-1}b^1\ns{-1}\cdots b^{n+1}\ns{-1})
\cdot ( a^0 b^0\ns{-2}\cdot a^1)\ot\cdots\\
&\hspace{7.3cm}\cdots\ot  S^{-1}(b^{n+1}\ns{-n-1})\cdot a^{n+1}  )>\\
&=\Phi(\psi\ot\phi)(a^0 b^0\ns{-1}a^1\al b^0\ns{0}b^1\ot  a^2\al
 b^2\odots a^{n+1}\al b^{n+1})\\
&=\p_0^{A\al B}\Phi(\psi\ot \phi)(a^0\al b^0\odots a^{n+1}\al b^{n+1}).
\end{align*}
Now we show that $\Phi$ commutes with cyclic operators.
\begin{align}
\begin{split}\label{5}
&\Phi(\hta\vta(\psi\ot\phi))(a^0\al b^0\odots a^{n}\al b^{n}) \\
&=\Phi(\tau\psi\ot\tau\phi)(a^0\al b^0\odots a^{n}\al b^{n})\\
&=<\tau\psi(b^0\ns{0}\odots b^n\ns{0})\;\mid\; \tau\phi( S^{-1}(b^0\ns{-1}\cdots
 b^n\ns{-1})a^0\ot\cdots\\
 &~~~~~~~~~~~~~~~~~~~~~~~~~~~~~~~~~~~~~~~~~~~~~~~~~\cdots\ot S^{-1}(b^n\ns{-n-1})a^n)>\\
&=<\psi(b^n\ns{0}\ot b^0\ns{0}\ot\cdots\\
&\cdots\ot b^{n-1}\ns{0})\cdot b^n\ns{-1}\;\mid \;
\a(\phi(S^{-1}(-) S^{-1}(b^n\ns{-n-2})\cdot a^n\ot \\
&S^{-1}(b^0\ns{-1}\cdots b^{n-1}\ns{-1}b^n\ns{-2})\cdot a^0
\odots S^{-1}(b^{n-1}\ns{-n}b^n\ns{-n-1})\cdot a^{n-1}))>.
\end{split}
 \end{align}
 Using \eqref{compatibility2}, and the fact that $\psi$ is $H$-colinear,  one has:
 \begin{align}\label{6}
 \begin{split}
&\eqref{5}=<[\psi(b^n\ns{0}\ot b^0\ns{0}\odots b^{n-1}\ns{0}) \cdot b^n\ns{-1}]
\ns{0} \;|\;\\ &\phi(
 S^{-1}([\psi(b^n\ns{0}\ot b^0\ns{0}\odots b^{n-1}\ns{0}) \cdot b^n\ns{-1}]
\ns{-1})(S^{-1}(b^n\ns{-n-2})\cdot a^n)\ot\\
&\ot S^{-1}(b^0\ns{1}\cdots b^{n-1}\ns{-1}b^n\ns{-2})\cdot a^0\odots S^{-1}
(b^{n-1}\ns{-n+1}b^n\ns{-n-1})\cdot a^{n-1})>.
\end{split}
 \end{align}
Using the fact that $N$ is AYD module we have,

\begin{align}\label{7}
 \begin{split}
&\eqref{6}=<\psi(b^n\ns{0}\ot b^0\ns{0}\odots b^{n-1}\ns{0})\cdot b^n\ns{-3}\; \mid\; \\
& \phi((S^{-1}(S(b^n\ns{-2})b^n\ns{-1} b^0\ns{-1}\cdots b^{n-1}\ns{-1})
b^n\ns{-4})(S^{-1}(b^n\ns{-n-2})\cdot a^n)\ot\\
&\ot S^{-1}(b^0\ns{1}\cdots b^{n-1}\ns{-1}b^n\ns{-5})\cdot a^0\odots
S^{-1}(b^{n-1}\ns{-n+1}b^n\ns{-n-4})\cdot a^{n-1})>.
\end{split}
 \end{align}
Using \eqref{compatibility1} and the facts that $\phi$ is $H$-linear and  $\mathcal{M}$ is AYD contramodule
 we see
\begin{align}
 \begin{split}
&\eqref{7}=<\psi(b^n\ns{0}\ot b^0\ns{0}\odots b^{n-1}\ns{0})b^n\ns{-1}\;\mid\; \\
& \phi(S^{-1}( b^0\ns{-1}\cdots b^{n-1}\ns{-1}) b^n\ns{-2}
S^{-1}(b^n\ns{-n-2})a^n\ot\\
&\ot S^{-1}(b^0\ns{1}\cdots b^{n-1}\ns{-1}b^n\ns{-3})
a^0\odots S^{-1}(b^{n-1}\ns{-n+1}b^n\ns{-n-2})a^{n-1})>\\
&=<\psi(b^n\ns{0}\ot b^0\ns{0}\odots b^{n-1}\ns{0})\;\mid\;
 \phi( S^{-1}( b^n\ns{-1}b^0\ns{-1}\cdots b^{n-1}\ns{-1} )a^n\ot\\
&\ot S^{-1}(b^0\ns{1}\cdots b^{n-1}\ns{-1})a^0\odots
S^{-1}(b^{n-1}\ns{-n+1})a^{n-1})>\\
&=\Phi(\phi\ot\psi)(a^n\al b^n \ot a^0\al b^0\odots
a^{n-1}\al b^{n-1})\\
&=\tau^{A\al B}\Phi(\phi\ot\psi)(a^0\al b^0\odots a^{n}\al b^{n}).
\end{split}
\end{align}
\end{proof}

\begin{theorem}\label{cup-theorem-2}
Let $H$ be a Hopf algebra,  $A$ be an $H$-module algebra, $B$ be an $H$-comodule
algebra, $(N, \mathcal{M})$ be a compatible pair of SAYD module and contramodule.
 Then the map $\hat\Phi:=\Phi\circ \overline{AW}$ defines a cup product:
\begin{equation} \label{cup-cont-algebra}
\hat\Phi:= \Phi\circ \overline{AW}:~^HHC^q(B, N)\ot HC^p_H(A,\mathcal{M})\ra HC^{p+q}(A\al B).
\end{equation}
\end{theorem}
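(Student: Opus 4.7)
The plan is to mirror exactly the strategy employed in Theorem \ref{cup-theorem-1}, replacing the inclusion $\iota \circ \Psi$ there with $\Phi$ here. Since the heavy lifting has already been carried out in Proposition \ref{cyclic} (establishing that $\Phi$ is a map of cocyclic modules between $D(C^{*,*}_{\rm a-a})$ and $C^\ast(A\al B)$), the proof itself should be short and formal.

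First I would pick representative cyclic cocycles $\psi \in {}^HC^q(B,N)$ and $\phi \in C^p_H(A,\mathcal{M})$ of the classes $[\psi]$ and $[\phi]$; without loss of generality one may assume $\hb(\psi) = \vb(\phi) = 0$ together with $\vta(\psi) = (-1)^q \psi$ and $\hta(\phi) = (-1)^p \phi$. By the explicit formulas for $b_T$ and $B_T$ given in \eqref{bT} and \eqref{BT}, these cyclic conditions assemble to show that $\psi \otimes \phi$ is a $(b_T, B_T)$-cocycle in $\Tot(C^{*,*}_{\rm a-a})^{p+q}$.

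Next I would invoke the cyclic Eilenberg--Zilber theorem for bi-paracyclic modules (as recalled via \eqref{AW}) to conclude that $\overline{AW}(\psi \otimes \phi)$ is a cyclic cocycle and therefore represents a well-defined class in $HC^{p+q}(D(C^{*,*}_{\rm a-a}))$. At this point Proposition \ref{cyclic} applies: $\Phi$ is a morphism of cocyclic modules, so it descends to a map on cyclic cohomology, sending the class of $\overline{AW}(\psi \otimes \phi)$ to a class in $HC^{p+q}(A\al B)$. One also checks independence of representatives in the usual way: cyclic coboundaries are sent to cyclic coboundaries by $\Phi$ and by $\overline{AW}$, so $\hat\Phi$ is well defined on cohomology.

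The substantive obstacle is not in this theorem itself but in Proposition \ref{cyclic}, whose proof relies in an essential way on both compatibility conditions \eqref{compatibility1} and \eqref{compatibility2}, together with the AYD conditions on $N$ and $\mathcal{M}$ and the (co)linearity of $\psi$ and $\phi$. Once that is in hand, the present theorem is a formal composition: $\hat\Phi = \Phi \circ \overline{AW}$ is a composition of maps respecting the $(b,B)$-structure, and so it automatically descends to a pairing $\hat\Phi : {}^HHC^q(B,N) \otimes HC^p_H(A,\mathcal{M}) \to HC^{p+q}(A \al B)$ as claimed.
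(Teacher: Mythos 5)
Your proposal is correct and follows essentially the same route as the paper's own proof: choose cyclic cocycle representatives $\psi$ and $\phi$, observe that $\psi\ot\phi$ is a $(b_T,B_T)$-cocycle in $\Tot(C_{\rm a-a}^{\ast,\ast})^{p+q}$, push it to the diagonal via $\overline{AW}$, and then apply the cyclic map $\Phi$ from Proposition \ref{cyclic}. Your added remark about independence of representatives is a harmless (and welcome) elaboration of what the paper leaves implicit.
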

\begin{proof}
The proof is similar to the proof of Theorem \ref{cup-theorem-1}.
Let $[\phi]\in ~^HHC^p(A,\mathcal{M})$ and $[\psi]\in HC^q_H(B,N)$. Without loss of
 generality one assumes that $\phi$ and $\psi$ are both cyclic cocycle, i.e,
  $\hb(\phi)=\vb(\psi)=0$,  $\hta(\phi)=(-1)^p\phi$ and
  $\vta(\psi)=(-1)^q\psi$ . This implies
 that $\psi\ot \phi$ is a $(b,B)$ cocycle in $\Tot(C_{\rm a-a}^{\ast,\ast})^{p+q}$.
 Hence $\overline{AW}(\psi\ot \phi)$ defines a class in
 $HC^{p+q}(D(C_{\rm a-a}^{\ast,\ast}))$.
 Finally,  since $\Phi$ is cyclic map, the transferred
  cochain   $\Phi(\overline{AW}(\psi\ot \phi))$ defines a class in $HC^{p+q}(A\al B)$.
\end{proof}
\section{Cup products for incompatible pairs}
In this section we generalize the cup products defined
in \eqref{cup-cont-coalgebra} and \eqref{cup-cont-algebra}
 to the case of incompatible coefficients. The target of
  the cup product in the new case is the ordinary cyclic
   cohomology of algebras with coefficients in a module
    produced out of the two incompatible coefficients.

Let $\mathcal{M}$ be a SAYD contramodule and let $N$ be a SAYD module over a Hopf algebra $H$. We define $L(N,\mathcal{M})$ to be the coequalizer
\begin{equation}
\xymatrix{
 N\ot _H \Hom(H, \mathcal{M})\ar@<.6 ex>[r] \ar@<-.6 ex>[r] & N\ot_H\mathcal{M}\ar[r] & L(N, \mathcal{M}),}
\end{equation}
where the  equalized maps are $n\ot m \mapsto n\ot \a(f)$, and $n\ot m\mapsto n\ns{0}\ot f(n\ns{-1})$.

\begin{remark}\label{remark}
{\rm
In fact $L(N,\mathcal{M})$ is the usual (contra)tensor product defined by Positselski \cite[page 96]{Pos09}. One considers $H$-coring $\mathcal{C}:=H\ot H$ with the usual coring structure and
 identifies $N$ with a left $H\ot H$-comodule  \cite{TB07}. In the same fashion one identifies $\mathcal{M}$ with  a right $\mathcal{C}$-contramodule. Then $L(N, \mathcal{M})$ is identified with the (contra)tensor $N\ot_\mathcal{C} \mathcal{M}$.}
\end{remark}
Now let $A$ and $C$ satisfy \eqref{0}...\eqref{3}. We
recall that the algebra  $B$ is $\Hom_H(C,A)$ with the convolution
multiplication and that  $C_{\rm a-c}^{\ast,\ast}$ is the bicocyclic module
defined in \eqref{c**}. We define
\begin{align}\label{acpsi-2}
&\tPsi: D(C_{\rm a-c}^{\ast,\ast})^q\ra \Hom(B^{\ot q+1},L(N,\mathcal{M})),\\\notag &
 \tPsi(\phi\ot
(n\ot c^0\odots c^q))(f^0\odots f^q)= n \ot_H  \phi(f^0(c^0)\odots f^q(c^q)).
\end{align}
By a similar argument as in the proof of Proposition \ref{cyclic-map}
one shows that $\td\Psi$ is a cyclic map. One proves the following theorem
 with a similar proof as of Theorem \ref{cup-theorem-1}
\begin{theorem} \label{cup-general-1}Let $H$ be a Hopf algebra,  $A$ be
an $H$-module algebra,  $C$ be an
 $H$-module coalgebra acting on $A$,  and   $(N, \mathcal{M})$ be a not necessarily
 compatible pair of SAYD
  module and contramodule over $H$. Then $\iota\circ\tPsi\circ\overline{AW}$
 defines a cup product in the level of cyclic cohomology:
$$\iota\circ\tPsi\circ\overline{AW}: HC^p_H(A,\mathcal{M})\ot HC^q_H(C,N)\ra HC^{p+q}(A, L(N,\mathcal{M})).$$
\end{theorem}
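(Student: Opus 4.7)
The plan is to mimic the three-step structure of the proof of Theorem \ref{cup-theorem-1}, with the pairing $\langle - \mid - \rangle : N\otimes \mathcal{M}\to\Cb$ replaced by the canonical projection $\pi:N\ot_H\mathcal{M}\twoheadrightarrow L(N,\mathcal{M})$. The two explicit compatibilities \eqref{compatibility1} and \eqref{compatibility2} then drop out of the picture, and are respectively replaced by (a) the $H$-balance intrinsic to $\ot_H$, and (b) the defining coequalizer identity
\[
n\ot_H \a(g)\;=\;n\sns{0}\ot_H g(n\sns{-1})\qquad \text{in }L(N,\mathcal{M}).
\]

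First I would show that $\tPsi$ is a well-defined linear map into $\Hom(B^{\ot q+1}, L(N,\mathcal{M}))$. This is the exact analogue of the opening computation in the proof of Proposition \ref{cyclic-map}: one uses only the $H$-linearity of $\phi$ and of each $f^i$ and the fact that $C$ is an $H$-module coalgebra, so that pushing $h$ past the tensor symbol on the $N$-side matches the diagonal $h$-action pulled through $\phi$ on the $A$-side. The role previously played by compatibility \eqref{compatibility1} is now played tautologically by the definition of $\ot_H$.

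Next, I would verify that $\tPsi$ is a map of cocyclic modules. As in the proof of Proposition \ref{cyclic-map}, it suffices to check compatibility with the zeroth coface, the last codegeneracy, and the cyclic operator, since these generate the cocyclic structure. The first two are direct transcriptions of the earlier proof. For the cyclic operator --- the crucial new computation --- one runs exactly the same chain of equalities used there for $\Psi$, but with the step ``$\langle n\sns{0}\mid\a(g)\rangle = \langle n\sns{0}\sns{0}\mid g(n\sns{0}\sns{-1})\rangle$'' replaced by the corresponding identity in $L(N,\mathcal{M})$. All the ancillary manipulations --- coassociativity of the coaction of $N$, $H$-equivariance of $\phi$, and the SAYD stability of $N$ --- carry over verbatim because they never involved the pairing itself.

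Finally, given cyclic cocycles $\phi\in C^p_H(A,\mathcal{M})$ and $\om\in C^q_H(C,N)$, their external tensor $\phi\ot\om$ is a $(b,B)$-cocycle in $\Tot(C^{\ast,\ast}_{\rm a-c})^{p+q}$; applying $\overline{AW}$ produces a class in $HC^{p+q}(D(C^{\ast,\ast}_{\rm a-c}))$, the previous step transports this to $HC^{p+q}(B,L(N,\mathcal{M}))$, and pullback along the algebra inclusion $\iota:A\hookrightarrow B$ lands in $HC^{p+q}(A,L(N,\mathcal{M}))$. The only genuinely new ingredient relative to Theorem \ref{cup-theorem-1} is the cyclic-operator check, and the hard part there is purely bookkeeping: the pairing-based identity used in Proposition \ref{cyclic-map} was in fact the image under $\langle-|-\rangle$ of the coequalizer relation defining $L(N,\mathcal{M})$, so removing the pairing simply keeps the identity at its native level, which is exactly where $\tPsi$ takes values.
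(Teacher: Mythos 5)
Your proposal is correct and follows essentially the same route as the paper, which itself only states that Theorem \ref{cup-general-1} is proved ``by a similar argument'' to Proposition \ref{cyclic-map} and Theorem \ref{cup-theorem-1}. Your identification of exactly which identities replace \eqref{compatibility1} and \eqref{compatibility2} --- the $H$-balance of $\ot_H$ and the coequalizer relation defining $L(N,\mathcal{M})$ --- is precisely the content of that analogy, so nothing further is needed.
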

One notes that the range of this cup product is the
ordinary cyclic cohomology of the algebra $A$ with
coefficients in the vector space $L(N,\mathcal{M})$. One also
notes that if $(N,\mathcal{M})$ is compatible then  $E:L(N,\mathcal{M})\ra \Cb $
defined by $E(n,m)=<n,m>$ is a map of vector spaces. As a result we get a cyclic map
$$ \td E: \Hom(A^{\ot \ast},L(N,\mathcal{M}))\ra \Hom(A^{\ot \ast},\Cb), \quad \td E(\vp)=E\circ\vp.$$
 So we cover the old  cup product as   $\Psi=\tPsi\circ \td E$, where $\Psi$ is defined in \eqref{acpsi}.

Now let us generalize the other cup product for algebra-algebra in a similar fashion  as  the case of algebra-coalgebra.  Let $A$  be a
left $H $-module algebra,
 $B$ be a left $H $-comodule algebra,  $(N, \mathcal{M})$ be a   pair
  of SAYD module and contramodule over $H$, and   $A\al B$ be
  the crossed product algebra defined in \ref{crossed-algebra}.
   Let also $C_{\rm a-a}^{\ast,\ast}$ be
    the bicocyclic module defined in \eqref{bicyclic-algebra} .
     We define
 \begin{align}\label{abpsi-2}
&\td\Phi:D(C_{\rm a-a}^{\ast,\ast})^q\ra \Hom((A\al B)^{\ot q+1},L(N,\mathcal{M})),\\\notag
&\td \Phi(\psi\ot\phi)(a^0\al b^0\odots a^q\al b^q)\\\notag
&=\psi(b^0\ns{0}\odots b^q\ns{0})\ot_H \phi(  S^{-1}(b^0\ns{-1}\cdots
 b^q\ns{-1})a^0\odots S^{-1}(b^q\ns{-q-1})a^q).
\end{align}
Similarly we prove that $\td\Phi$ is cyclic and induces a
map on the level of cyclic cohomologies:
  \begin{theorem}\label{cup-theorem-2}
Let $H$ be a Hopf algebra,  $A$ be an $H$-module algebra,
$B$ be an $H$-comodule algebra, $(N, \mathcal{M})$ be a compatible
pair of SAYD module and contramodule. Then the map $\td\Phi\circ\overline{AW}$ defines a cup product in the level of cyclic cohomology:
$$\td\Phi\circ\overline{AW}:\;\;^HHC^p(B, N)\ot HC^q_H(A,\mathcal{M})\ra HC^{p+q}(A\al B, L(N,\mathcal{M})).$$
\end{theorem}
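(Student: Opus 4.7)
My plan is to follow the template already established for Theorem \ref{cup-theorem-1} and, more closely, for the compatible algebra-algebra cup product of Theorem \ref{cup-theorem-2}, whose core is the cyclicity statement of Proposition \ref{cyclic}. The only new ingredient is that, in the incompatible setting, the roles previously played by the two compatibility identities \eqref{compatibility1} and \eqref{compatibility2} are taken over verbatim by the two defining relations of the coequalizer $L(N,\mathcal{M})$: the balancing $n\cdot h\ot_H m = n\ot_H h\cdot m$, and the coequalizer identification $n\ot_H \alpha(f)=n\sns{0}\ot_H f(n\sns{-1})$.

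First I would check that $\tilde\Phi$ is well-defined: because the output lies in $L(N,\mathcal{M})$, it suffices to verify that the $H$-colinearity of $\psi$ and the $H$-linearity of $\phi$ allow one to migrate an element of $H$ across $\ot_H$ and land on the same representative. This is the same bookkeeping as the well-definedness check at the beginning of the proof of Proposition \ref{cyclic-map}, with the scalar pairing replaced by the balanced tensor product.

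The principal step is to show that $\tilde\Phi$ is a morphism of cocyclic modules from $D(C^{*,*}_{\rm a-a})$ to $C^*(A\al B, L(N,\mathcal{M}))$. The codegeneracies and the cofaces $\p_i$ with $0\le i<n$ do not act on the coefficient position, so they commute with $\tilde\Phi$ exactly as in the compatible case. The delicate operator is the cyclic operator $\tau^{A\al B}$. I would replicate the chain of equalities in the proof of Proposition \ref{cyclic}, but work throughout inside $N\ot_H \mathcal{M}$ modulo the coequalizer. Wherever the compatible proof invoked \eqref{compatibility1} to migrate an $H$-action across the scalar pairing, I would use the balancing relation over $H$; wherever it invoked \eqref{compatibility2} to pull the contramodule structure map $\alpha$ through the pairing, I would use the coequalizer relation in $L(N,\mathcal{M})$. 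The intrinsic AYD contramodule identity for $\mathcal{M}$ and the AYD module identity for $N$ continue to hold and are used in the same places as before. The last coface $\p_n^{A\al B}$ then commutes with $\tilde\Phi$ because it is recovered from $\tau^{A\al B}$ and the already-handled $\p_0^{A\al B}$.

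Once $\tilde\Phi$ is known to be cyclic, the cup product is assembled as in Theorem \ref{cup-theorem-1}: for cyclic cocycles $\phi$ and $\psi$ the tensor $\psi\ot\phi$ is a $(b_T,B_T)$-cocycle in $\Tot(C^{*,*}_{\rm a-a})^{p+q}$, the cyclic Alexander-Whitney map $\overline{AW}$ produces a well-defined class in $HC^{p+q}(D(C^{*,*}_{\rm a-a}))$, and the cyclic map $\tilde\Phi$ transfers it to $HC^{p+q}(A\al B, L(N,\mathcal{M}))$. The main obstacle is the cyclic-operator verification: the computation in Proposition \ref{cyclic} is already lengthy and depends on a careful interleaving of the antipode, $\alpha$, and the Sweedler legs of the $b^i\ns{-k}$. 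Lifting it from a scalar pairing to a tensor in $L(N,\mathcal{M})$ forces one to keep both slots active throughout, but since the coequalizer was engineered precisely to internalize the two compatibility identities, every step should close and I expect no new algebraic obstruction beyond more strenuous bookkeeping.
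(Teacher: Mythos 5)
Your proposal follows exactly the route the paper takes: the paper proves this theorem by remarking that $\td\Phi$ is cyclic ``similarly'' to Proposition \ref{cyclic}, with the two compatibility identities \eqref{compatibility1} and \eqref{compatibility2} replaced by the balancing relation over $H$ and the coequalizer relation defining $L(N,\mathcal{M})$, and then transfers cocycles via $\overline{AW}$ as in Theorem \ref{cup-theorem-1}. Your write-up is correct and, if anything, spells out more explicitly than the paper does which relation substitutes for which compatibility.
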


\end{document}